\documentclass[12pt]{amsart} 
\usepackage{amssymb,latexsym, amscd, wasysym, stmaryrd, cite} 
\usepackage[mathscr]{eucal}
\pagestyle{plain} 
\setlength{\textwidth}{6.4in} 
\setlength{\textheight}{8.5in} 
\setlength{\hoffset}{-.7in} 
\setlength{\voffset}{-.4in}

\theoremstyle{plain}
\newtheorem{thm}{Theorem}
\newtheorem{cor}{Corollary}
\newtheorem{lem}{Lemma}
\newtheorem{prop}{Proposition}

\newtheorem*{namedtheorem}{\theoremname}
\newcommand{\theoremname}{testing}
\newenvironment{named}[1]{\renewcommand{\theoremname}{#1}
    \begin{namedtheorem}}
    {\end{namedtheorem}}

\theoremstyle{definition}

\newtheorem{ex}{Example}
\newtheorem{remark}{Remark}

\numberwithin{equation}{section}

\renewcommand{\leq}{\leqslant}
\renewcommand{\geq}{\geqslant}
\newcommand{\norm}{\trianglelefteqslant}
\newcommand{\cin}{\subseteq}

\newcommand{\nin}{\notin}

\newcommand{\setst}[2]{\{\,#1\mid #2 \}}

\newcommand{\inv}[1]{#1^{-1}}

\newcommand{\calF}{\mathcal{F}}
\newcommand{\calC}{\mathcal{C}}

\newcommand{\id}{\operatorname{id}}

\newcommand{\Hom}{\operatorname{Hom}}

\newcommand{\SL}{\operatorname{SL}}

\newcommand{\PSL}{\operatorname{PSL}}
\newcommand{\Aut}{\operatorname{Aut}}
\newcommand{\Out}{\operatorname{Out}}

\newcommand{\Alt}{\operatorname{Alt}}

\begin{document}

\title[Goldschmidt's thesis for fusion systems]{Analogues of Goldschmidt's thesis for fusion systems}
\author{Justin Lynd}
\address{Department of Mathematics, The Ohio State University, Columbus, OH 43210}
\email{jlynd@math.ohio-state.edu}
\author{Sejong Park}
\address{Department of Mathematical Sciences, University of Aberdeen, Aberdeen, UK  AB24 3UE}
\email{s.park@abdn.ac.uk}
\date{August 24, 2010}

\begin{abstract}
We extend the results of David Goldschmidt's thesis concerning fusion in finite groups 
to saturated fusion systems and to all primes.
\end{abstract}
\maketitle

\section{Introduction}

Recently, David Goldschmidt published his doctoral thesis \cite{Goldschmidt2008} which had gone unpublished since
1968. In it he shows that if $G$ is a finite simple group and $T$ is a Sylow $2$-subgroup of $G$, then
the exponent of $Z(T)$ (and hence of $T$) is bounded by a function of the nilpotence class
of $T$.
He also includes in the write-up a fusion factorization result for an arbitrary finite group
involving $\mho^1Z$ and the Thompson subgroup. 
In this paper, we generalize these results to saturated fusion systems.

Throughout this paper unless otherwise indicated, $p$ will be a prime number, $n$ a nonnegative integer,
and $P$ a nontrivial finite $p$-group.

\begin{thm}\label{main}
Suppose $P$ is of nilpotence class at most 
$n(p-1) + 1$, and $\calF$ is a saturated fusion system on $P$ with $O_p(\calF) = 1$. Then $Z(P)$ has exponent
at most $p^n$. 
\end{thm}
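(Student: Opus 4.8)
The plan is to prove the apparently stronger statement that $\mho^n(Z(P)) \trianglelefteq \calF$; since $O_p(\calF) = 1$ by hypothesis, this forces $\mho^n(Z(P)) = 1$, which is precisely the assertion that $Z(P)$ has exponent at most $p^n$. The case $n = 0$ is the anchor and is immediate: there $P$ is abelian, so $C_P(Q) = P$ for every $Q \cin P$, whence the only $\calF$-centric subgroup is $P$ itself; by Alperin's fusion theorem $\calF = N_{\calF}(P)$ and $O_p(\calF) = P \neq 1$, contrary to hypothesis. Thus I may assume $n \geq 1$, and the content lies in showing that the characteristic subgroup $D := \mho^n(Z(P))$ is normal in $\calF$.

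To verify $D \trianglelefteq \calF$ I would invoke Alperin's fusion theorem, reducing the claim to the invariance of $D$ under $\Aut_{\calF}(P)$ and under $\Aut_{\calF}(Q)$ for each $\calF$-centric, $\calF$-radical subgroup $Q$. Invariance under $\Aut_{\calF}(P)$ is automatic, as $D$ is characteristic in $P$. If $Q$ is $\calF$-centric then $Z(P) \cin C_P(Q) = Z(Q)$, so $D \cin \mho^n(Z(Q)) \cin Q$; moreover $\Aut_P(Q)$ fixes $D$ pointwise, since $D \cin Z(P)$. By saturation $\Aut_P(Q)$ is a Sylow $p$-subgroup of $\Aut_{\calF}(Q)$, so $\Aut_{\calF}(Q) = O^p(\Aut_{\calF}(Q))\,\Aut_P(Q)$, and it remains only to show that each $p'$-element $\alpha$ of $\Aut_{\calF}(Q)$ stabilizes the subgroup $D$.

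This last point is the heart of the matter and the step I expect to be the main obstacle, precisely because such an $\alpha$ need not preserve $Z(P)$ itself, only the larger abelian group $Z(Q)$ on which it acts coprimely. The idea is to play this coprime action against the nilpotence-class bound on $P$. Setting $W = \gen{\alpha^i(Z(P)) : i \geq 0}$, the smallest $\alpha$-invariant subgroup of $Z(Q)$ containing $Z(P)$, coprimality yields a splitting $W = C_W(\alpha) \times [W,\alpha]$ that is respected by $\mho^n$. The quantitative input, and the sole place where the hypothesis that $P$ has class at most $n(p-1)+1$ is used, should be a power--commutator estimate bounding how far $\alpha$ can displace a $p^n$-th power of a central element; concretely I would aim to show that the $\alpha$-moved part of $W$ is annihilated by $p^n$, so that $\mho^n(W)$ lands in $C_W(\alpha)$ and $\alpha$ therefore stabilizes $D \cin \mho^n(W)$.

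The mechanism behind that estimate is the interaction of the $p$-power map with the lower central series: a single application of $x \mapsto x^p$ pushes the non-leading terms $p-1$ steps deeper (by the Hall--Petrescu collection formula), so that after $n$ applications the error terms lie past $\gamma_{n(p-1)+1}(P) = 1$ and vanish. Read through the coprime action this is exactly what forces the relevant exponent bound, and I would expect to formalize it by induction on $n$, peeling off one factor of $p$ and $(p-1)$ central steps at each stage. Carrying out this collection computation while simultaneously tracking the grading and the coprimeness is the crux, and it is the fusion-free analogue of the power--commutator estimate at the core of Goldschmidt's thesis; once it is established, the reduction above yields $D \trianglelefteq \calF$, hence $D \cin O_p(\calF) = 1$, completing the proof.
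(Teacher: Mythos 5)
Your global architecture --- prove that $D:=\mho^n(Z(P))$ is normal in $\calF$ and then quote $O_p(\calF)=1$ --- is exactly the paper's (this is its Theorem \ref{norm}), and your reduction via Alperin's fusion theorem to invariance of $D$ under $\Aut_\calF(Q)$ for $\calF$-centric radical $Q$ is sound in outline (modulo replacing $Q$ by a fully $\calF$-normalized conjugate so that $\Aut_P(Q)$ is Sylow in $\Aut_\calF(Q)$). But the step you yourself identify as the crux is a genuine gap, and the lemma you propose there is false. You claim that the class bound forces $[W,\alpha]$ to have exponent at most $p^n$, where $\alpha$ is a $p'$-element of $\Aut_\calF(Q)$ and $W=\langle\alpha^i(Z(P))\rangle\leq Z(Q)$. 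The Hall--Petrescu collection you invoke takes place inside $P$ and only controls commutators $[z^{p^n},t]$ with $t\in P$; that computation is precisely Goldschmidt's Corollary 6 (Lemma \ref{goldcent} of the paper), which yields $C_P(\mho^n(Z(Q)))>Q$ for proper $Q$. It gives no purchase on $\alpha$, which is an abstract $\calF$-automorphism of $Q$, not conjugation by an element of $P$, and coprimality alone cannot bound the exponent of $[W,\alpha]$. Concretely: take $p=3$, $n=1$, $Q=\langle a\rangle\times\langle b\rangle\cong C_9\times C_9$, and $P=Q\rtimes\langle c\rangle$ with $c$ of order $3$ acting by $a\mapsto a$, $b\mapsto ab$. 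Then $P$ has class $2\leq n(p-1)+1=3$, $C_P(Q)=Q$, and $Z(P)=\langle a\rangle\cong C_9$. The map $a\mapsto a^{-1}$, $b\mapsto b^{-1}$, $c\mapsto c$ is an automorphism $\alpha_P$ of $P$ of order $2$; set $\calF=\calF_P(P\rtimes\langle\alpha_P\rangle)$. Since $\Aut_P(P)$ is Sylow in $\Aut_\calF(P)$, the group $P$ is always $\calF$-centric and $\calF$-radical, and $\alpha_P$ is a $p'$-element of $\Aut_\calF(P)$ with $W=Z(P)=\langle a\rangle$, $C_W(\alpha_P)=1$, $\mho^1(W)=\langle a^3\rangle\neq 1$: so $[W,\alpha_P]=W$ has exponent $9>p^n$, refuting your estimate even at $Q=P$ (where you escape only because characteristicity gives stabilization for free). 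Note your strategy, like the paper's Theorem \ref{norm}, must establish normality of $D$ without assuming $O_p(\calF)=1$, so such examples (here $O_3(\calF)=P\neq 1$ and $D=\langle a^3\rangle$ is normal but not centralized by $\alpha_P$) are squarely in scope; your proposed mechanism never uses radicality, so radicality cannot rescue it.

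The deeper point is that centralization overshoots: what is needed is only $\alpha(D)=D$, and even that cannot be extracted by power--commutator calculus alone --- it is exactly where the paper deploys a fusion-theoretic mechanism absent from your sketch. In the paper's Proposition \ref{poschar}, the class bound enters solely through Lemma \ref{goldcent}, giving $C_P(\mho^n(Z(Q)))>Q$ for every proper $Q$; the extension axiom then extends an $\calF$-isomorphism $\beta$ from $\mho^n(Z(Q))$ onto a fully normalized conjugate to the strictly larger subgroup $C_P(\mho^n(Z(Q)))$, and induction on $|P:Q|$ together with Alperin's fusion theorem gives $\beta(\mho^n(Z(P)))=\mho^n(Z(P))$ and likewise for $\beta\alpha$, whence $\alpha(\mho^n(Z(P)))=\mho^n(Z(P))$. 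In short: the class bound is converted into growth of centralizers, and the extension axiom converts that growth into weak closure; no coprime-action splitting or exponent bound on $[W,\alpha]$ is available or needed. To repair your proof you should replace the coprime splitting step by this extension-and-induction argument (your $n=0$ anchor and the Frattini-type reduction $\Aut_\calF(Q)=O^p(\Aut_\calF(Q))\Aut_P(Q)$ then become unnecessary).
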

This bound is sharp for all $n$ and $p$; see Example \ref{exsharp} in Section 3. This also gives a bound on
the exponent of $P$ itself, which we certainly do not expect to be sharp.
\begin{cor}
Suppose that $P$ is of nilpotence class at most $n(p-1)+1$, and $\calF$ is a saturated fusion system on $P$ with
$O_p(\calF) = 1$. Then $P$ has exponent at most $p^{n^2(p-1) + n}$. 
\end{cor}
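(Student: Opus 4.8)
The plan is to deduce the corollary from Theorem~\ref{main} by propagating the bound on $\exp(Z(P))$ up the upper central series of $P$. Write $1 = Z_0 \leq Z_1 \leq \cdots \leq Z_c = P$ for the upper central series, so that $c$ is the nilpotence class of $P$, whence $c \leq n(p-1)+1$ by hypothesis, while $Z_1 = Z(P)$. Since this is a central series, a standard and elementary argument shows that $\exp(P)$ divides $\prod_{i=1}^{c}\exp(Z_i/Z_{i-1})$: for $x \in P$ one successively pushes $x$ down through the factors, noting that $x^{\exp(Z_c/Z_{c-1})} \in Z_{c-1}$, then that raising the result to the power $\exp(Z_{c-1}/Z_{c-2})$ lands it in $Z_{c-2}$, and so on down to $Z_0 = 1$. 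Thus it suffices to bound the exponent of each central factor $Z_i/Z_{i-1}$ by $p^n$.

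The key point, and the step I expect to be the main (if modest) obstacle, is the claim that the exponents of the upper central factors are non-increasing, i.e. $\exp(Z_{i+1}/Z_i)$ divides $\exp(Z_i/Z_{i-1})$ for $1 \leq i \leq c-1$. To see this, fix $g \in Z_{i+1}$ and consider the commutator map $\varphi_g \colon P \to Z_i/Z_{i-1}$ defined by $\varphi_g(x) = [g,x]Z_{i-1}$; this is well defined since $[Z_{i+1},P] \leq Z_i$. Using the commutator identities $[g,xy] = [g,y]\,[g,x]^y$ and $[gh,x] = [g,x]^h\,[h,x]$ together with the fact that $Z_i/Z_{i-1}$ is central in $P/Z_{i-1}$, one checks that $\varphi_g$ is a homomorphism and that $g \mapsto \varphi_g$ is itself a homomorphism $Z_{i+1} \to \Hom(P, Z_i/Z_{i-1})$ whose kernel is exactly $Z_i$, by the very definition of the upper central series. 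Hence $Z_{i+1}/Z_i$ embeds into $\Hom(P, Z_i/Z_{i-1})$, and since the latter is an abelian group of exponent dividing $\exp(Z_i/Z_{i-1})$, the claim follows. Iterating yields that $\exp(Z_i/Z_{i-1})$ divides $\exp(Z_1) = \exp(Z(P))$ for every $i$.

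Finally I would assemble the pieces. Theorem~\ref{main} gives $\exp(Z(P)) \leq p^n$, so by the previous paragraph each factor satisfies $\exp(Z_i/Z_{i-1}) \leq p^n$. Combining this with the product bound and with $c \leq n(p-1)+1$ gives $\exp(P) \leq \prod_{i=1}^{c}\exp(Z_i/Z_{i-1}) \leq (p^n)^{c} \leq (p^n)^{n(p-1)+1} = p^{n^2(p-1)+n}$, as required. The only real content beyond Theorem~\ref{main} is the monotonicity of the central-factor exponents; the remainder is a routine count.
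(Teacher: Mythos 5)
Your proposal is correct and follows essentially the same route as the paper: both induct up the upper central series to show every factor $Z_i/Z_{i-1}$ has exponent at most $p^n$, and then multiply across the at most $n(p-1)+1$ factors. Your embedding of $Z_{i+1}/Z_i$ into $\Hom(P, Z_i/Z_{i-1})$ is just a repackaging of the paper's commutator identity $[x^{p^n},t] \equiv [x,t]^{p^n} \pmod{Z^{k-1}(P)}$ (indeed it yields the slightly sharper divisibility statement), so the two arguments are the same in substance.
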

\begin{proof}
By Theorem \ref{main}, $Z(P)$ has exponent at most $p^n$. We claim that then every upper central
quotient also has exponent at most $p^n$, and we shall prove this by induction. Let $k \geq 1$, and let
$x \in Z^{k+1}(P)$. If $x^{p^n}$ does not lie in $Z^k(P)$, then there exists $t \in P$ such that
$[x^{p^n}, t]$ does not lie in $Z^{k-1}(P)$. But by a standard commutator identity, $
[x^{p^n}, t] \equiv [x, t]^{p^n} \equiv 1$ modulo $Z^{k-1}(P)$, since by induction $Z^{k}(P)/Z^{k-1}(P)$
has exponent at most $p^n$. This contradiction establishes the claim. The nilpotence class of $P$ is
at most $n(p-1)+1$ by hypothesis, so the exponent of $P$ is at most $p^{n(n(p-1)+1)}$.
\end{proof}

Theorem \ref{main} is obtained from the following.

\begin{thm}\label{norm}
Suppose $P$ has nilpotence class at most $n(p-1)+1$ and $\calF$ is a saturated fusion system on $P$. Then $\mho^n(Z(P))$
is normal in $\calF$. 
\end{thm}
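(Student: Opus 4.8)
The plan is to prove the equivalent assertion that the characteristic subgroup $W := \mho^n(Z(P))$ is preserved by enough $\calF$-automorphisms, and then to assemble the conclusion via Alperin's fusion theorem. Since $W \leq Z(P)$ we have $W \norm P$, so it suffices to show that $W \leq R$ and that $\alpha(W) = W$ for every $\alpha \in \Aut_\calF(R)$, as $R$ ranges over the $\calF$-essential subgroups together with $P$ itself; this is the standard reduction provided by Alperin's fusion theorem for a central target subgroup. For $R = P$ there is nothing to do, because $W$ is characteristic in $P$. Thus the entire problem concentrates at the essential subgroups.

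Next I would exploit centricity. Each essential subgroup $E$ is $\calF$-centric, so $C_P(E) = Z(E)$; as $Z(P)$ centralizes $E$ this gives $Z(P) \leq Z(E) \leq E$, and hence $W \leq Z(P) \leq E$. In particular $\alpha \in \Aut_\calF(E)$ is defined on $W$, and—this is the key structural observation—$\alpha$ preserves every characteristic subgroup of $E$, in particular $\mho^n(Z(E))$. Therefore the whole difficulty reduces to the purely $p$-group-theoretic comparison
\[
\mho^n(Z(E)) = \mho^n(Z(P)),
\]
for then $\alpha$ fixes $\mho^n(Z(E)) = W$ setwise and we are done. The inclusion $\mho^n(Z(P)) \leq \mho^n(Z(E))$ is immediate from $Z(P) \leq Z(E)$; the content is the reverse inclusion, and this is exactly where the class bound must be used.

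To prove the reverse inclusion I would take $a \in Z(E)$ and produce an element $v \in Z(P)$ with $a^{p^n} = v^{p^n}$, so that $a^{p^n} \in \mho^n(Z(P))$. The tool is the Hall--Petrescu collection formula: expanding $a^{p^n}$ against the commutators $[a,g]$ for $g \in P$ produces correction terms lying in successive terms $\gamma_i$ of the lower central series, each raised to the exponent $\binom{p^n}{i}$. By Kummer's theorem $v_p\!\binom{p^n}{i} = n - v_p(i)$, while $\gamma_i = 1$ once $i$ exceeds the class. The hypothesis that the class is at most $n(p-1)+1$ is calibrated precisely so that for every surviving weight the remaining power of the correction is absorbed, allowing $a^{p^n}$ to be rewritten as $v^{p^n}$ with $v$ centralizing all of $P$. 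I expect this collection-formula estimate to be the main obstacle: one must track and cancel all correction terms simultaneously, and verify that the constant $n(p-1)+1$, and not a weaker bound, is what makes $\mho^n$ compatible with descent from $Z(E)$ to $Z(P)$. The sharpness recorded in Example \ref{exsharp} is a strong indication that this single step carries the genuine difficulty, the Alperin reduction and the centricity observation being purely formal. Once $\mho^n(Z(E)) = \mho^n(Z(P))$ is established for every essential $E$, invariance of $W$ under all $\Aut_\calF(E)$ and under $\Aut_\calF(P)$ follows, and the reduction above yields that $\mho^n(Z(P))$ is normal in $\calF$.
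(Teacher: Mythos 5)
Your opening reduction is fine: invariance of $W = \mho^n(Z(P))$ under $\Aut_\calF(R)$ for $R$ ranging over the essential subgroups and $P$, together with $W \leq R$ for all such $R$, gives weak closure and containment in a conjugation family, hence normality by Proposition \ref{equivnorm}. The fatal step is the claimed equality $\mho^n(Z(E)) = \mho^n(Z(P))$, specifically the reverse inclusion $\mho^n(Z(E)) \leq \mho^n(Z(P))$ that you propose to extract from the Hall--Petrescu collection formula using only the class bound. That statement is false. Take $p=2$, $n=1$, $P = D_8 = \langle r,s \mid r^4 = s^2 = 1,\ srs = r^{-1}\rangle$, which has class exactly $2 = n(p-1)+1$, and $E = \langle r\rangle \cong C_4$, which is self-centralizing in $P$ and hence centric. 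Then $Z(E) = E$, so $\mho^1(Z(E)) = \langle r^2\rangle = Z(P) \neq 1$, while $\mho^1(Z(P)) = 1$: there is no $v \in Z(P)$ with $v^2 = r^2$, so no collection identity can produce one. Your sketched argument uses only that $a \in Z(E)$ for a subgroup of a group of bounded class --- it invokes neither essentiality nor the fusion system at any point --- so it is an attempted proof of this false statement and cannot be repaired as written. (In $D_8$ this $C_4$ happens not to be essential, so your final conclusion is not contradicted by the example; but nothing in your proof of the key inclusion distinguishes essential subgroups from centric ones, and it is exactly that proof which breaks.)

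The correct group-theoretic input is strictly weaker than your equality, and the paper's fusion argument is correspondingly different. Goldschmidt's result (Lemma \ref{goldcent}) asserts only that $C_P(\mho^n(Z(Q))) = Q$ forces $Q = P$; in the $D_8$ example, consistently, $C_P(\mho^1(Z(E))) = P > E$. The paper (Proposition \ref{poschar}) uses just the easy inclusion $W(P) \leq W(Q)$ for centric $Q$, and then, for a proper fully normalized centric $Q$, the strict inequality $C_P(W(Q)) > Q$ supplied by Lemma \ref{goldcent}: an isomorphism $\beta$ of $W(Q)$ onto a fully normalized conjugate satisfies $C_P(W(Q)) \leq N_\beta$, so by the extension axiom it extends to the strictly larger subgroup $C_P(W(Q))$, where induction on the index in $P$ together with Alperin's fusion theorem shows the extension preserves $W(P)$; applying this to both $\beta$ and $\beta\alpha$ yields $\alpha(W(P)) = W(P)$ for $\alpha \in \Aut_\calF(Q)$. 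So where you try to make $W$ invariant because it coincides with a characteristic subgroup of $E$ --- a claim that is too strong and false --- the paper makes it invariant by pushing each automorphism up to a larger subgroup where induction applies; the extension axiom, entirely absent from your proposal, is the engine. A useful heuristic check: if your equality held for all centric subgroups, Lemma \ref{goldcent} would be an immediate triviality, since then $C_P(\mho^n(Z(Q))) = C_P(\mho^n(Z(P))) = P$ for every centric $Q$, whereas that lemma is the genuinely hard content imported from Goldschmidt's thesis.
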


In the course of proving this last result 
in the group case for $p=2$, Goldschmidt reduces to the 
situation in which a putative counterexample $G$ has a weakly embedded $2$-local subgroup. 
Then his post-thesis classification \cite{Goldschmidt1972} of such groups 
gives a contradiction. However, any weakly embedded $p$-local $M$ controls $p$-fusion, and so
the $p$-subgroup $O_p(M)$ will show up as a normal subgroup in the fusion system, a shadow of the weakly
embedded phenomenon. This allows the corresponding fusion result to hold for an arbitrary prime.

We note that Theorem \ref{norm} has the following corollary in the category of groups.

\begin{thm}\label{maingrp}
Let $P$ be a nonabelian Sylow $p$-subgroup of a finite group $G$. Suppose that $P$ has nilpotence
class at most $n(p-1) + 1$ and that $G$ has no nontrivial strongly closed abelian $p$-subgroup.
Then $Z(P)$ has exponent at most $p^n$. 
\end{thm}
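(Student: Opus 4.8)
The plan is to deduce this from Theorem \ref{norm} by passing to the fusion system of $G$. Since $P$ is a Sylow $p$-subgroup of $G$, the fusion system $\calF := \calF_P(G)$ is saturated, and its morphisms are precisely the maps induced by $G$-conjugation between subgroups of $P$. As $P$ has nilpotence class at most $n(p-1)+1$, Theorem \ref{norm} applies and yields that $\mho^n(Z(P))$ is normal in $\calF$.

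Next I would invoke the standard dictionary between normality in $\calF_P(G)$ and strong closure in $G$. A subgroup normal in a fusion system is in particular strongly closed, and for $\calF = \calF_P(G)$ this strong closure is exactly the classical group-theoretic notion: no element of $\mho^n(Z(P))$ is $G$-conjugate to an element of $P$ lying outside $\mho^n(Z(P))$. Moreover $\mho^n(Z(P))$ lies inside the abelian group $Z(P)$, and is therefore itself abelian. Thus $\mho^n(Z(P))$ is a strongly closed abelian $p$-subgroup of $G$. By the hypothesis that $G$ has no nontrivial such subgroup, we conclude $\mho^n(Z(P)) = 1$.

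Finally, since $Z(P)$ is abelian, the subgroup $\mho^n(Z(P))$ is generated by the $p^n$-th powers of elements of $Z(P)$, so its triviality says exactly that $z^{p^n} = 1$ for every $z \in Z(P)$, i.e. $Z(P)$ has exponent at most $p^n$. I expect the only delicate point to be the translation step: confirming that a subgroup normal in $\calF_P(G)$ is strongly closed in $G$ in the classical sense, so that the group-theoretic hypothesis on strongly closed abelian subgroups can be applied. The nonabelian hypothesis on $P$ plays no active role in the argument—were $P$ abelian, then $P = Z(P)$ would itself be a nontrivial strongly closed abelian $p$-subgroup of $G$, contrary to hypothesis, so the condition is automatically met—and note that Theorem \ref{main} is not invoked directly, since we do not assume $O_p(\calF) = 1$.
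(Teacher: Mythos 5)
Your proof is correct and takes essentially the same route as the paper: form $\calF_P(G)$, apply Theorem \ref{norm} to conclude that $\mho^n(Z(P))$ is normal in $\calF_P(G)$, hence strongly $\calF$-closed by Proposition \ref{equivnorm}, which for $\calF_P(G)$ is exactly classical strong closure in $P$ with respect to $G$, so the hypothesis forces $\mho^n(Z(P)) = 1$. The points you spell out---the fusion-to-group translation, the abelianness of $\mho^n(Z(P))$, and the observation that the nonabelian hypothesis on $P$ is only there to keep the hypotheses consistent---are all accurate and merely make explicit what the paper leaves implicit.
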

\begin{proof}
We can form the saturated fusion system $\calF_P(G)$, and Theorem \ref{norm} then says that
$\mho^n(Z(P))$ is strongly $\calF$-closed (see Proposition \ref{equivnorm} below), 
that is, strongly closed in $P$ with respect to $G$.
Thus, $\mho^n(Z(P))$ must be trivial.
\end{proof}

Using a recent theorem of Flores and Foote \cite{FloresFoote2009}, in which
they apply the Classification of Finite Simple Groups to describe all finite
groups having a strongly closed $p$-subgroup, we get the following direct
generalization of Goldschmidt's main theorem.  Note that Corollary \ref{corgrp}
is the only result in this paper which relies on a deep classification theorem.

\begin{cor}\label{corgrp}
Let $P$ be a nonabelian Sylow $p$-subgroup of a finite simple group $G$. If $P$ has nilpotence
class at most $n(p-1)+1$, then $Z(P)$ has exponent at most $p^n$. 
\end{cor}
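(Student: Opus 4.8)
The plan is to split into two cases according to whether $G$ admits a nontrivial strongly closed abelian $p$-subgroup, so that the bulk of the work is handled by Theorem \ref{maingrp} and only a short list of exceptional groups remains. Since $P$ is nonabelian by hypothesis, if $G$ has \emph{no} nontrivial strongly closed abelian $p$-subgroup, then Theorem \ref{maingrp} applies verbatim and gives that $Z(P)$ has exponent at most $p^n$. It therefore remains to treat the case in which $G$ does possess a nontrivial strongly closed abelian $p$-subgroup $A$; note that $A$ is necessarily proper in a Sylow subgroup, since $P$ is nonabelian while $A$ is abelian.

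In that case I would appeal to the Flores--Foote classification of finite groups containing a nontrivial strongly closed $p$-subgroup. Applied to the simple group $G$, together with the information that $A$ is abelian and proper, this pins $G$ down to an explicit short list. Because our Sylow subgroup $P$ is assumed nonabelian, I may discard every member of the list whose Sylow $p$-subgroup is abelian; the survivors form a handful of families, the most prominent at the prime $2$ being the Suzuki groups $Sz(q)$ and the unitary groups $U_3(2^m)$, with analogues at odd primes.

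For each surviving group the conclusion is then a direct computation of two invariants of $P$: its nilpotence class and the exponent of its center. In the known examples $P$ has class $2$ and $Z(P) = [P,P]$ is elementary abelian, so $Z(P)$ has exponent $p$. The hypothesis that $P$ has class at most $n(p-1)+1$ forces $2 \le n(p-1)+1$, hence $n \ge 1$, and therefore $Z(P)$ has exponent $p \le p^n$, exactly as required.

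The main obstacle is the second step: extracting from the Flores--Foote theorem the precise list of finite simple groups that carry a nontrivial strongly closed abelian $p$-subgroup, and then, for each family with nonabelian Sylow subgroup, confirming the two invariants above. The logical argument is immediate once the list is secured, but one must verify that no family slips through with nilpotence class exceeding $2$ or with center of exponent greater than $p$; such a group would demand a separate estimate tying its class to the exponent of its center, rather than the one-line check available in the class-$2$, exponent-$p$ case.
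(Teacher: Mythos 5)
Your proposal is correct and takes essentially the same route as the paper: the paper simply supposes $A := \mho^n(Z(P)) \neq 1$, notes that Theorem \ref{norm} makes $A$ a nontrivial strongly closed abelian subgroup, and then inspects the Flores--Foote list \cite{FloresFoote2009} to conclude that either $P$ is abelian or $Z(P)$ has exponent $p$, so that nonabelianness of $P$ forces $n \geq 1$ and the bound follows; your case split via Theorem \ref{maingrp} is an equivalent packaging of the same two ingredients. One small simplification you can make: your worry about a listed family having nilpotence class exceeding $2$ is moot, since once $P$ is nonabelian the hypothesis already forces $n \geq 1$, and the only fact needed from the inspection is that $Z(P)$ has exponent $p$.
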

\begin{proof}
Suppose to the contrary that $A := \mho^n(Z(P)) \neq 1$. Then by Theorem \ref{norm}, $A$
is a nontrivial strongly closed abelian subgroup of $P$. By inspection of the simple groups
arising in the conclusion of the main theorem in \cite{FloresFoote2009}, either $P$ is
abelian or $Z(P)$ has exponent $p$. Since $P$ is nonabelian, we must have $n \geq 1$ and
the corollary follows.
\end{proof}

However, if the hypotheses of Corollary \ref{corgrp} are weakened slightly to assume only that 
$F^{*}(G)$ is simple, then the
statement is false for all odd primes $p$, as the following example shows. Let $H = \PSL(2,q)$
with $q = r^p$ for some prime power $r$ and with the $p$-part of $q-1$ equal to $p^e$.
Let $\sigma$ be a field automorphism of ${\bf F}_q$ of order $p$ and 
$G = H\langle \sigma \rangle$. If $P$ is a Sylow $p$-subgroup of $G$, then $P$ has
nilpotence class $2$, while $Z(P)$ has exponent $p^{e-1}$, and we may take $e$ as large as we like. 

Recall the Thompson subgroup $J(P)$, defined as the group generated by the abelian subgroups of $P$
of maximum order. We also prove the following factorization result.

\begin{thm}\label{fact}
Let $\calF$ be a saturated fusion system on $P$. Then
\[
\calF = \langle\,\,C_\calF(\mho^1(Z(P)),\, N_{\calF}(J(P))\,\,\rangle.
\]
\end{thm}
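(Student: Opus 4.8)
The plan is to apply Alperin's fusion theorem for saturated fusion systems, which says that $\calF$ is generated by the automorphism groups $\Aut_\calF(Q)$ as $Q$ runs over the essential subgroups of $\calF$ together with $P$ itself, and we may take the essential representatives to be fully normalized. Writing $W := \mho^1(Z(P))$ and letting $\mathcal{G} := \langle\, C_\calF(W),\, N_\calF(J(P))\,\rangle$ be the generated subsystem, it then suffices to prove $\Aut_\calF(Q) \cin \mathcal{G}$ for each such $Q$, since $\mathcal{G}$ is a subsystem of $\calF$ on $P$ and is closed under composition and restriction. Note that whenever $Q$ is $\calF$-centric (as every essential subgroup is) we have $W \le Z(P) \le C_P(Q) = Z(Q) \le Q$, so both conditions ``$\varphi$ centralizes $W$'' and ``$\varphi$ normalizes $J(P)$'' are meaningful for $\varphi \in \Aut_\calF(Q)$.

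First I would dispose of the case $J(P) \le Q$, which includes $Q = P$. Since $J(P) \le Q \le P$, a maximal-order abelian subgroup of $P$ lies in $J(P) \le Q$ and so is of maximal order in $Q$ as well, while every maximal-order abelian subgroup of $Q$ has order at most that of one in $P$; hence $J(Q) = J(P)$, which is therefore characteristic in $Q$. Consequently any $\varphi \in \Aut_\calF(Q)$ satisfies $\varphi(J(P)) = \varphi(J(Q)) = J(Q) = J(P)$, and because $J(P) \le Q = N_P(J(P)) \cap Q$ the map $\varphi$ is its own extension witnessing $\varphi \in \Aut_{N_\calF(J(P))}(Q)$. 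Thus $\Aut_\calF(Q) \cin N_\calF(J(P)) \cin \mathcal{G}$ in this case.

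Now suppose $J(P) \not\le Q$; here I must show that every $\varphi \in \Aut_\calF(Q)$ centralizes $W$, giving $\Aut_\calF(Q) \cin C_\calF(W) \cin \mathcal{G}$. The action of $\Aut_\calF(Q)$ on $Z(Q)$, and hence on $W \le Z(Q)$, factors through $\Out_\calF(Q)$, since $\Inn(Q)$ fixes $Z(Q)$ pointwise. As $Q$ is fully normalized, $\Aut_P(Q) \in \Syl_p(\Aut_\calF(Q))$; and every element of $P$ centralizes $Z(P) \ge W$ by definition of the center, so $\Aut_P(Q)$ already centralizes $W$. The kernel $K := C_{\Aut_\calF(Q)}(W)$ is normal and contains the Sylow $p$-subgroup $\Aut_P(Q)$, hence contains every Sylow $p$-subgroup and $O^{p'}(\Aut_\calF(Q))$; thus $\Aut_\calF(Q)/K$ is a $p'$-group, and the only obstruction to $\Aut_\calF(Q) \cin C_\calF(W)$ is a possible nontrivial coprime action of this $p'$-quotient on the abelian $p$-group $W$.

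The hard part will be ruling out this residual $p'$-action, and this is where both the hypothesis $J(P) \not\le Q$ and the passage to $p$-th powers are essential; I expect it to be the main obstacle. Since $J(P) \not\le Q$, some maximal-order abelian subgroup $A$ of $P$ has $A \not\le Q$. After adjusting $Q$ within its $\calF$-class and using the extension axiom of saturation to realize a suitable offender inside $N_P(Q)$, I would apply the Thompson Replacement Theorem, together with the fact that $\Out_\calF(Q)$ has a strongly $p$-embedded subgroup, to produce a quadratic offender of $\Aut_\calF(Q)$ on $Z(Q)$. The quadratic nature of such an offender links its action on $Z(Q)$ to the $p$-power map: the $p$-th power of a quadratically acting $p$-element differs from the identity only by a term built from the (vanishing) double commutator, so its effect is controlled precisely on $\mho^1(Z(P))$. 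This is the structural reason that $\mho^1$, rather than all of $Z(P)$, appears in the statement, and it is what forces the $p'$-quotient to centralize $W$. Granting this step, $\Aut_\calF(Q) \cin C_\calF(W)$, and combining the two cases with Alperin's fusion theorem yields the asserted factorization.
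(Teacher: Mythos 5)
You concede the decisive step rather than prove it (``Granting this step\dots''), and the tool you propose for it cannot close the gap. You correctly reduce matters to the action of the $p'$-quotient $\Aut_\calF(Q)/C_{\Aut_\calF(Q)}(W)$ on $W = \mho^1(Z(P))$: since $\Aut_P(Q)$ is Sylow in $\Aut_\calF(Q)$ and lies in the normal subgroup $K = C_{\Aut_\calF(Q)}(W)$, \emph{every} $p$-element of $\Aut_\calF(Q)$ already centralizes $W$. But Thompson replacement and offender arguments produce nontrivial $p$-subgroups acting quadratically --- exactly the elements you have just shown are harmless --- while a $p'$-automorphism acting quadratically on an abelian $p$-group is unipotent in coprime characteristic and hence trivial, so quadraticity can never constrain the $p'$-part. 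Your closing remark about $p$-th powers of quadratically acting elements again concerns $p$-elements only. Worse, the statement you are aiming at --- the per-essential dichotomy that $\Aut_\calF(Q) \cin C_\calF(\mho^1(Z(P)))$ whenever $J(P) \not\leq Q$, i.e.\ that every $\calF$-automorphism of such a $Q$ restricts to the \emph{identity} on $\mho^1(Z(P))$ --- is substantially stronger than the theorem, which asserts only generation and allows each morphism to be a composite drawn from both subsystems; there is no evidence this dichotomy holds, and no strategy in sight for the $p'$-action on $W$. (Your first case, $J(P) \leq Q$ forcing $J(Q) = J(P)$ and $\Aut_\calF(Q) \cin N_\calF(J(P))$, is correct and matches a subcase of the paper.)

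The paper's proof avoids needing any such dichotomy. It proves, by induction on the Thompson ordering over \emph{all} subgroups $Q \geq Z(P)$ (not just essentials), that $N_\calF(Q) \cin \calF' = \langle\, C_\calF(\mho^1(Z(P))),\, N_\calF(J(P)) \,\rangle$, first reducing to fully $\calF$-normalized $Q$. Setting $W = \mho^1(Z(Q))$ (note: $Z(Q)$, not $Z(P)$), $N = N_P(Q)$, and $C = C_N(W)$, there are two cases. If $C = Q$, Goldschmidt's Lemma~8 (Lemma~\ref{goldj}) gives $J(N) \leq Q$, and either $N = P$ (so $J(P) = J(Q)$ and one lands in $N_\calF(J(P))$) or $J(N) >_P Q$ and induction applies; this lemma, not Thompson replacement, is where the interaction between $J$ and $\mho^1$ enters. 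If $C > Q$, then $C >_P Q$, and the Onofrei--Stancu Frattini argument (Proposition~\ref{frattini}) applied inside $N_\calF(Q)$ gives $N_\calF(Q) = \langle\, NC_{N_\calF(Q)}(W),\, N_{N_\calF(Q)}(C)\,\rangle$; the second factor falls to induction, and the first lands in $C_\calF(\mho^1(Z(P)))$ because its morphisms restrict on $W$ to conjugation by elements $x \in N \leq P$, which is the identity on $\mho^1(Z(P)) \leq \mho^1(Z(Q)) = W$. The crucial flexibility is that morphisms need only centralize $W$ \emph{up to} $P$-conjugation --- a condition your per-essential containment cannot simulate. To rescue your outline you would have to either prove the dichotomy outright (in particular, control the $p'$-core of $\Out_\calF(Q)$ on $\mho^1(Z(P))$, for which you offer no tool) or adopt a composite, inductive scheme of the paper's kind.
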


\section{Definitions and notation}

We collect in this section the necessary information on fusion systems. Since there are by now many good
sources of this knowledge \cite{BrotoLeviOliver2003}, in particular in background sections of papers 
\cite{DiazGlesserMazzaPark2009, KessarLinckelmann2008} to which this one is similar, we will content ourselves to be brief.

Let $P$ be a finite $p$-group. A \emph{category on $P$} is a category $\calF$ with objects the subgroups
of $P$ and whose morphism sets $\Hom_\calF(Q, R)$ consist of injective group homomorphisms subject to
the requirement that every morphism in $\calF$ is a composition of an isomorphism in $\calF$ and an inclusion.

Let $\calF$ be a category on the $p$-group $P$. Let $Q$ and $R$ be subgroups of $P$. We write $\Aut_\calF(Q)$ for
$\Hom_{\calF}(Q, Q)$, $\Hom_P(Q, R)$ for the set of group homomorphisms in $\calF$ from $Q$ to $R$ induced by 
conjugation by elements of $P$, and $\Out_\calF(Q)$ for $\Aut_\calF(Q)/\Aut_Q(Q)$. 

We say $Q$ is
\begin{itemize}
\item \emph{fully $\calF$-normalized} if $|N_P(Q)| \geq |N_P(Q')|$ for all $Q'$ which are $\calF$-isomorphic to $Q$,
\item \emph{fully $\calF$-centralized} if $|C_P(Q)| \geq |C_P(Q')|$ for all $Q'$ which are $\calF$-isomorphic to $Q$,
\item \emph{$\calF$-centric} if $C_P(Q') \leq Q'$ for all $Q'$ which are $\calF$-isomorphic to $Q$, and
\item \emph{$\calF$-radical} if $O_p(\Out_\calF(Q)) = 1$. 
\end{itemize}

For a morphism $\varphi: Q \to P$ in $\calF$, let
\[
N_\varphi = \setst{x \in N_P(Q)}{\exists y \in N_P(\varphi(Q)),\, \forall z \in Q,\, \varphi(xz\inv{x}) = y\varphi(z)\inv{y}}
\]
Note that we have $QC_P(Q) \leq N_\varphi$ for all $\varphi: Q \to P$ in $\calF$.

A \emph{saturated fusion system} on $P$ is a category $\calF$ on $P$ whose morphism sets 
contain all group homomorphisms induced by conjugation by elements of $P$, and which satisfies the 
following two axioms. 
\begin{itemize}
\item (Sylow axiom) $\Aut_P(P)$ is a Sylow $p$-subgroup of $\Aut_\calF(P)$, and
\item (Extension axiom) for every isomorphism $\varphi: Q \to Q'$ with $Q'$ fully $\calF$-normalized, there exists
a morphism $\tilde{\varphi}: N_\varphi \to P$ such that $\tilde{\varphi}\,|_Q = \varphi$. 
\end{itemize}

For the remainder of the paper, $\calF$ will denote a saturated fusion system on the finite $p$-group $P$,
even though we will often drop the adjective ``saturated''. 

For $Q \leq P$, we define the following local subcategories of $\calF$. The \emph{normalizer} $N_\calF(Q)$
of $Q$ in $\calF$ is the category on $N_P(Q)$ such that for any $R_1, R_2 \leq N_P(Q)$, 
$\Hom_{N_\calF(Q)}(R_1, R_2)$ consists of those $\varphi: R_1 \to R_2$ in $\calF$ for which there is
an extension $\tilde{\varphi}: QR_1 \to QR_2$ of $\varphi$ in $\calF$ such that $\tilde{\varphi}(Q) = Q$.
The \emph{centralizer} $C_\calF(Q)$ of $Q$ in $\calF$ is the category on $C_P(Q)$ such that for any 
$R_1, R_2 \leq C_P(Q)$, 
$\Hom_{C_\calF(Q)}(R_1, R_2)$ consists of those $\varphi: R_1 \to R_2$ in $\calF$ for which there is
an extension $\tilde{\varphi}: QR_1 \to QR_2$ of $\varphi$ in $\calF$ such that $\tilde{\varphi}\,|_Q = \id_Q$.
Lastly, we define $N_P(Q)C_\calF(Q)$ as we do the normalizer of $Q$, but only allow those $\varphi: R_1 \to R_2$ whose 
extensions $\tilde{\varphi}$ restrict to automorphisms in $\Aut_P(Q)$.

If $Q$ is fully $\calF$-normalized, then $N_\calF(Q)$ is a saturated fusion system. And if $Q$ is fully
$\calF$-centralized, then both $C_\calF(Q)$ and $N_P(Q)C_\calF(Q)$ are saturated fusion systems.

A \emph{characteristic functor} is a mapping from finite $p$-groups to finite $p$-groups which takes $Q$ to a characteristic
subgroup $W(Q)$ of $Q$ such that for any group isomorphism $\varphi: Q \to Q'$, $\varphi(W(Q)) = W(Q')$. We say
that a characteristic functor is \emph{positive} provided $W(Q) \neq 1 $ whenever $Q \neq 1$. The \emph{center
functor}, sending a finite $p$-group $P$ to its center, is a positive characteristic $p$-functor.

A \emph{conjugation family} for $\calF$ is a set $\calC$ of nonidentity subgroups of $P$ such that $\calF$ is
generated by compositions and restrictions of morphisms in $\Aut_\calF(Q)$ as $Q$ ranges over $\calC$. Alperin's
fusion theorem for saturated fusion systems says that the set of $\calF$-centric, $\calF$-radical subgroups is a
conjugation family for $\calF$, and we call this the \emph{Alperin conjugation family}.

Recall that a subgroup $W$ of $P$ is said to be \emph{weakly $\calF$-closed} if for each $\varphi \in \Hom_\calF(W, P)$,
$\varphi(W) = W$. The subgroup $W$ is \emph{strongly $\calF$-closed} if for each subgroup $W'$ of $W$ and each
$\varphi \in \Hom_\calF(W', P)$, $\varphi(W') \leq W$.  We say $W$ is \emph{normal} in $\calF$ 
if $\calF = N_\calF(W)$, and denote by $O_p(\calF)$ the largest such subgroup of $P$.

\section{Bounding the exponent}

The following proposition is slightly misstated in \cite[Proposition \!1.6]{BCGLO2005}, where a normal
$W$ is claimed to be contained in every radical subgroup. For this reason, we
state a correct version here, but the proof in \cite{BCGLO2005} goes through with little
modification. 
\begin{prop}\label{equivnorm}
Let $\calF$ be a fusion system on $P$ and $W \leq P$. The following are equivalent.
\begin{enumerate}
\item[(a)] $W$ is normal in $\calF$.
\item[(b)] $W$ is strongly $\calF$-closed and is contained in every $\calF$-centric, $\calF$-radical 
    subgroup of $P$.
\item[(c)] $W$ is weakly $\calF$-closed and is contained in every subgroup of some conjugation family for $\calF$.
\end{enumerate}
\end{prop}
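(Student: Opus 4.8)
The plan is to prove the three conditions equivalent cyclically, establishing $(a)\Rightarrow(b)\Rightarrow(c)\Rightarrow(a)$. At the outset I record that each of the conditions forces $W\trianglelefteq P$: weak (hence strong) $\calF$-closure gives this at once, since for $x\in P$ the $P$-conjugation map restricts to a morphism $W\to P$ in $\calF$ and so must send $W$ to $W$; and $(a)$ gives it because $N_\calF(W)$ is a category on $N_P(W)$, so $\calF=N_\calF(W)$ forces $N_P(W)=P$. In particular $WQ$ is always a $p$-subgroup of $P$, which I will use freely.

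For $(a)\Rightarrow(b)$, strong closure is the easy half: given $W'\leq W$ and $\varphi\in\Hom_\calF(W',P)$, the equality $\calF=N_\calF(W)$ lets me extend $\varphi$ to a morphism on $WW'=W$ that sends $W$ to $W$, whence $\varphi(W')\leq W$. The substantive half is that $W$ lies in every $\calF$-centric, $\calF$-radical subgroup $Q$; this is exactly the point where the statement of \cite{BCGLO2005} must be corrected to demand centricity. Here I would argue as follows. Since $\calF=N_\calF(W)$, every $\alpha\in\Aut_\calF(Q)$ extends to some $\tilde\alpha\in\Aut_\calF(WQ)$ with $\tilde\alpha(W)=W$; as also $\tilde\alpha(Q)=\alpha(Q)=Q$, one checks that $\tilde\alpha(w)\in N_W(Q)=W\cap N_P(Q)$ for each $w\in N_W(Q)$ and that $\alpha\,(c_w|_Q)\,\alpha^{-1}=c_{\tilde\alpha(w)}|_Q$. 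Thus $\Aut_W(Q)\trianglelefteq\Aut_\calF(Q)$, so its image in $\Out_\calF(Q)$ is a normal $p$-subgroup and hence trivial by $\calF$-radicality. Then $\Aut_W(Q)\leq\Aut_Q(Q)$, and $\calF$-centricity (so $C_P(Q)\leq Q$) upgrades this to $N_W(Q)\leq Q$. Finally, because $W\trianglelefteq P$ one has $N_{WQ}(Q)=N_W(Q)\,Q=Q$ in the $p$-group $WQ$; since normalizers grow in $p$-groups this forces $WQ=Q$, i.e. $W\leq Q$.

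The remaining implications are comparatively routine. For $(b)\Rightarrow(c)$: strong closure trivially gives weak closure (take $W'=W$ and use injectivity of morphisms), and the Alperin conjugation family, namely the set of $\calF$-centric, $\calF$-radical subgroups, serves as the required conjugation family $\calC$, since $(b)$ guarantees $W\leq Q$ for each of its members. For $(c)\Rightarrow(a)$: fix a conjugation family $\calC$ with $W\leq Q$ for all $Q\in\calC$. For each such $Q$ and each $\alpha\in\Aut_\calF(Q)$, restricting $\alpha$ to $W$ yields a morphism in $\calF$, so weak closure gives $\alpha(W)=W$; as $WQ=Q$, this says precisely that $\alpha\in\Aut_{N_\calF(W)}(Q)$. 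Since $N_\calF(W)$ contains all inclusions and is closed under composition and restriction, and $\calC$ generates $\calF$, I conclude $\calF\cin N_\calF(W)$; the reverse containment is automatic, so $\calF=N_\calF(W)$.

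The main obstacle is the containment-in-radicals step of $(a)\Rightarrow(b)$: it requires combining the extension property coming from normality (to obtain $\Aut_W(Q)\trianglelefteq\Aut_\calF(Q)$), the radical hypothesis (to kill it inside $\Out_\calF(Q)$), and, crucially, centricity (to pass from $N_W(Q)\leq Q$ back to $W\leq Q$ through the $p$-group normalizer argument). It is exactly the omission of centricity that makes the cited statement in \cite{BCGLO2005} fail, and reinstating it is what the corrected version above is designed to capture.
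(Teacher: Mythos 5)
Your proof is correct and follows essentially the same route as the paper, which simply cites \cite{BCGLO2005} and notes that the argument there goes through with the centricity correction: your cyclic scheme, with the key step that normality yields $\Aut_W(Q)\trianglelefteq\Aut_\calF(Q)$, radicality pushes $\Aut_W(Q)$ into $\Aut_Q(Q)$, and centricity plus normalizer growth in the $p$-group $WQ$ forces $W\leq Q$, is exactly the repaired \cite{BCGLO2005} argument the authors have in mind. In particular you have correctly identified centricity as the hypothesis whose omission made the original statement fail, which is the only ``modification'' the paper's proof-by-citation requires.
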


\begin{lem}\label{goldcent}
    Suppose $P$ has nilpotence class at most $n(p-1) + 1$.
    If $Q$ is a subgroup of $P$ with $C_P(\mho^n(Z(Q))) = Q$, then $Q = P$. 
\end{lem}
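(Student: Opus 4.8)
The plan is to argue by contradiction, using normalizer growth in the $p$-group $P$ together with an operator computation inside the abelian group $Z(Q)$. Suppose $Q \neq P$. Since $Q$ is a proper subgroup of the finite $p$-group $P$, normalizer growth gives $Q < N_P(Q)$, so $N_P(Q)/Q$ is a nontrivial $p$-group and contains an element of order $p$; I choose $g \in N_P(Q)\setminus Q$ with $g^p \in Q$. As $g$ normalizes $Q$ it normalizes the characteristic subgroup $Z(Q)$, so conjugation by $g$ restricts to an automorphism $\alpha$ of the abelian group $A := Z(Q)$. The goal is to show that $\alpha$ fixes $\mho^n(Z(Q))$ pointwise: this says precisely that $g$ centralizes $\mho^n(Z(Q))$, whence $g \in C_P(\mho^n(Z(Q))) = Q$, contradicting $g \notin Q$.

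Write $A$ additively and set $\delta = \alpha - 1$. Two relations drive the argument. First, since $g^p \in Q$ and $Q$ centralizes $Z(Q)$, conjugation by $g^p$ is trivial on $A$, so $\alpha^p = \mathrm{id}$, i.e. $(1+\delta)^p = 1$ as operators on $A$. Second, for $x \in Z(Q)$ one has $\delta(x) = x^{-1}x^g = [x,g]$, and by induction $\delta^k(x) = [x,g,\dots,g]$ (with $k$ entries $g$), a left-normed commutator of weight $k+1$ lying in $\gamma_{k+1}(P)$. Since $P$ has class at most $m := n(p-1)+1$, we have $\gamma_{m+1}(P) = 1$, hence $\delta^m = 0$; that is, $\delta$ is nilpotent of degree at most $m$.

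It then remains to deduce from $(1+\delta)^p = 1$ and $\delta^m = 0$ that $p^n\delta = 0$ on $A$ (equivalently, $\delta(p^n x) = p^n\delta(x) = 0$ for all $x$, which is exactly the desired fixing of $\mho^n(Z(Q)) = p^n Z(Q)$). Expanding $(1+\delta)^p - 1 = 0$ and using $p \mid \binom{p}{k}$ for $1 \leq k \leq p-1$ produces a relation of the shape $\delta^p = -p\,\delta\,u$ with $u \in \mathbb{Z}[\delta]$ of constant term $1$. Multiplying repeatedly by $\delta^{p-1}$ gives, by induction, $\delta^{\,j(p-1)+1} = (-1)^j p^j \delta u^j$ for all $j \geq 1$. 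Taking $j = n$ and invoking $\delta^m = \delta^{\,n(p-1)+1} = 0$ yields $p^n\delta u^n = 0$. Finally, writing $u^n = 1 + \delta v$ gives $p^n\delta = -p^n\delta^2 v$, and substituting this identity into itself $k$ times produces $p^n\delta = (-1)^k p^n \delta^{\,k+1} v^k$; choosing $k \geq m-1$ makes $\delta^{\,k+1} = 0$, so $p^n\delta = 0$, as required.

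The substantive step is the last one: extracting the divisibility $p^n\delta = 0$ from the nilpotence degree $n(p-1)+1$. This is where the numerology $n(p-1)+1$ is forced, and it reflects the fact that in the cyclotomic ring $\mathbb{Z}_p[\zeta_p]$ the uniformizer $t = \zeta_p - 1$ satisfies $p \sim t^{p-1}$, so that $p^n t$ lies in the ideal $(t^{\,n(p-1)+1})$. Everything else — normalizer growth, the reduction to the action of $g$ on $Z(Q)$, and the commutator-weight bound giving $\delta^m = 0$ — I expect to be routine. The one point requiring care is packaging this ideal membership so that it applies to $\delta$ merely \emph{acting} on an arbitrary abelian $p$-group $A$ (where $t = \alpha-1$ is a zero divisor in the ambient group ring $\mathbb{Z}_p[\langle g\rangle]$, not a uniformizer in a domain); the bootstrap via $u^n = 1 + \delta v$ is what handles this cleanly.
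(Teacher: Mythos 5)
Your proposal is correct, and it is worth noting that the paper itself does not prove this lemma at all: its ``proof'' is a one-line citation to Corollary 6 of Goldschmidt's thesis. So you have supplied the argument the paper outsources, and what you have written is, in essence, Goldschmidt's own argument. Each step checks out: since $Q \trianglelefteq N_P(Q)$ properly (the normalizer condition in $p$-groups), you may indeed pick $g \in N_P(Q)\setminus Q$ with $g^p \in Q$; conjugation by $g$ restricts to $\alpha \in \Aut(Z(Q))$ with $\alpha^p = \id$ because $Q$ centralizes $Z(Q)$; the identification $\delta^k(x) = [x,g,\dots,g] \in \gamma_{k+1}(P)$ is valid because $\delta = \alpha - 1$ is an endomorphism of the abelian group $Z(Q)$ carrying $Z(Q)$ into itself, so the class bound gives $\delta^{n(p-1)+1} = 0$; and the binomial expansion of $(1+\delta)^p - 1 = 0$, with $p \mid \binom{p}{k}$ for $0 < k < p$, gives $\delta^p = -p\delta u$ with $u \equiv 1 \pmod{\delta}$, whence your induction yields $\delta^{n(p-1)+1} = (-1)^n p^n \delta u^n$ and so $p^n\delta u^n = 0$. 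Your final bootstrap can be streamlined: since $\delta$ is nilpotent, $u^n = 1 + \delta v$ is a unit in the commutative ring $\mathbb{Z}[\delta] \subseteq \operatorname{End}(Z(Q))$ (invert via the finite geometric series), so $p^n\delta = 0$ follows in one step rather than by repeated substitution --- but your version is equally valid and makes the same point about working with zero divisors in $\operatorname{End}(Z(Q))$ rather than in a cyclotomic domain. From $p^n\delta = 0$ you correctly conclude that $g$ centralizes $\mho^n(Z(Q)) = p^nZ(Q)$, so $g \in C_P(\mho^n(Z(Q))) = Q$, the desired contradiction. The only thing your write-up adds beyond what is strictly needed is the motivational aside about $\mathbb{Z}_p[\zeta_p]$; the proof itself stands without it, and degenerate cases ($n = 0$, or $\mho^n(Z(Q)) = 1$) are handled automatically by the same computation.
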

\begin{proof}
This is Corollary 6 in \cite{Goldschmidt2008}.
\end{proof}

\begin{prop}\label{poschar}
Let $W$ be a characteristic subfunctor of the center functor such that $W(P)\leq W(Q)$
for all $Q \leq P$ with $C_P(Q) \leq Q$. Then for any fusion system $\calF$ on $P$, either 
there exists a proper $\calF$-centric subgroup $Q$ of $P$ such that $C_{P}(W(Q)) = Q$, or $W(P)$ is normal
in $\calF$. 
\end{prop}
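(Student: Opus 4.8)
The plan is to prove the contrapositive-style dichotomy directly: assume $W(P)$ is not normal in $\calF$, and produce a proper $\calF$-centric subgroup $Q$ with $C_P(W(Q)) = Q$. Since $W$ is a characteristic subfunctor of the center functor, we have $W(P) \leq Z(P)$, so $W(P)$ is automatically weakly $\calF$-closed (indeed any $\varphi \in \Hom_\calF(W(P),P)$ fixes $W(P)$ because $W$ is preserved by the isomorphism $P \to P$ underlying any morphism, using that $W$ is a characteristic functor and $W(P)$ is central hence normal in $P$). By the corrected Proposition \ref{equivnorm}, part (c), the failure of normality means $W(P)$ fails to be contained in every member of some conjugation family. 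Taking the Alperin conjugation family of $\calF$-centric, $\calF$-radical subgroups, I would conclude that there is an $\calF$-centric subgroup $Q$ of $P$ with $W(P) \not\leq Q$; in particular $Q$ is proper, since $W(P) \leq P$.

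Next I would exploit the hypothesis relating $W(P)$ and $W(Q)$. The key point is that the offending centric subgroup $Q$ satisfies $C_P(Q) \leq Q$ ($\calF$-centric subgroups are in particular centric in $P$), so by the hypothesis $W(P) \leq W(Q)$. Combined with $W(Q) \leq Z(Q)$ this gives $W(P) \leq Z(Q) \leq C_P(W(Q))$, but I want to compute $C_P(W(Q))$ precisely. The idea is to pass to a subgroup where the centralizer condition can be forced. Since $W(Q) \leq Z(Q)$ we always have $Q \leq C_P(W(Q))$; the task is to arrange equality by choosing $Q$ minimal among centric subgroups not containing $W(P)$, or by a descent argument on $C_P(W(Q))$. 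If $C_P(W(Q)) \neq Q$, then $R := C_P(W(Q))$ strictly contains $Q$ and one shows $R$ is again $\calF$-centric with $W(P) \not\leq R$ while $W(R) \geq W(P)$; iterating, or choosing $Q$ to have maximal order among centric subgroups failing to contain $W(P)$, one reaches a subgroup with $C_P(W(Q)) = Q$.

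The cleaner route, which I would take, is to choose $Q$ to be of \emph{maximal order} among all $\calF$-centric subgroups with $W(P) \not\leq Q$; such $Q$ exists and is proper. Then I claim $C_P(W(Q)) = Q$. Indeed $C_P(W(Q)) \geq Q$ as above. If the containment were strict, set $R = C_P(W(Q))$. Since $W(Q) \leq Z(Q) \leq Z(R)$ would force $W(Q)$ central in $R$, one checks $R$ is still $\calF$-centric (centric is inherited upward here because $C_P(R) \leq C_P(Q) \leq Q \leq R$) and $R > Q$. By maximality of $Q$, we must have $W(P) \leq R = C_P(W(Q))$; but then $[W(P), W(Q)] = 1$, and since $W(P), W(Q) \leq Z(Q)$ are both central in $Q$ this gives no contradiction directly — so here I would instead use $W(P) \leq W(Q) \leq Z(Q) \leq Q$, contradicting $W(P) \not\leq Q$. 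This contradiction shows $C_P(W(Q)) = Q$, completing the proof.

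The step I expect to be the main obstacle is verifying that the enlarged subgroup $R = C_P(W(Q))$ remains $\calF$-centric and checking carefully that the maximality choice interacts correctly with the hypothesis $W(P) \leq W(Q)$; the subtlety is that the hypothesis is phrased for subgroups with $C_P(Q) \leq Q$ in $P$ (centric in $P$), while $\calF$-centricity is a stronger, conjugacy-invariant condition, so I must make sure the $Q$ extracted from the Alperin family genuinely satisfies $C_P(Q) \leq Q$ before invoking $W(P) \leq W(Q)$. Once that compatibility is pinned down, the maximality argument forces $C_P(W(Q)) = Q$ and yields the dichotomy.
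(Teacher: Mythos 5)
Your proof breaks down at its very first step, and the failure propagates through everything after it. The claim that $W(P)$ is ``automatically weakly $\calF$-closed'' is false: a morphism $\varphi \in \Hom_\calF(W(P), P)$ in a fusion system is not the restriction of any automorphism of $P$ --- there is no ``isomorphism $P \to P$ underlying'' it --- so the fact that $W$ is a characteristic functor tells you nothing about $\varphi(W(P))$. Establishing weak closure of $W(P)$ is precisely the content of the proposition, and it is exactly where saturation (the extension axiom) must enter; your argument never invokes saturation at all. Moreover, you have the two conditions in Proposition \ref{equivnorm}(c) exactly backwards: since $W(P) \leq Z(P)$ and every $\calF$-centric subgroup $Q$ satisfies $Z(P) \leq C_P(Q) \leq Q$, containment of $W(P)$ in every member of the Alperin conjugation family is the \emph{automatic} half, and weak closure is the half that can fail. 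Consequently there is no $\calF$-centric subgroup $Q$ with $W(P) \not\leq Q$: the set over which you take a maximal element is empty, and the entire maximality/descent argument is vacuous. This is visible in your own closing step: the ``contradiction'' $W(P) \leq W(Q) \leq Z(Q) \leq Q$ versus $W(P) \not\leq Q$ makes no use of the assumption $C_P(W(Q)) > Q$, so it cannot yield $C_P(W(Q)) = Q$; it only re-proves that the subgroup $Q$ you chose never existed in the first place.

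What is actually needed (and what the paper does) is the contrapositive in the other variable: assume there is \emph{no} proper $\calF$-centric $Q$ with $C_P(W(Q)) = Q$, and prove that $W(P)$ is weakly $\calF$-closed; normality then follows from Proposition \ref{equivnorm}(c) together with the automatic containment above. The weak closure is proved by induction on $|P:Q|$ over fully normalized $\calF$-centric subgroups: for $\alpha \in \Aut_\calF(Q)$ with $Q < P$, the standing assumption gives $C_P(W(Q)) > Q$ (note $Q \leq C_P(W(Q))$ always, since $W(Q) \leq Z(Q)$), and the extension axiom lets one extend an isomorphism $\beta: W(Q) \to R$, with $R$ fully normalized, to the strictly larger subgroup $C_P(W(Q))$; applying induction via Alperin's fusion theorem to $\tilde{\beta}$ and to the analogous extension of $\beta\alpha|_{W(Q)}$ yields $\alpha(W(P)) = W(P)$. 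In that argument the hypothesis $W(P) \leq W(Q)$ for centric $Q$ is used so that $\alpha$ even acts on $W(P)$, not to produce a subgroup avoiding $W(P)$. Your proposal is missing this entire mechanism, and no rearrangement of the maximality argument can substitute for it.
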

\begin{proof}
Suppose there is no proper $\calF$-centric subgroup $Q$ of $P$ with $C_P(W(Q)) = Q$. We will show that
$W(P)$ is weakly closed in $\calF$. In this case, $W(P) \leq Z(P)$ is contained in every $\calF$-centric
subgroup of $P$, hence in every member of an Alperin conjugation family for $\calF$. 
Thus, by Proposition \ref{equivnorm}, $W(P)$ is in fact normal in $\calF$. 

Let $Q$ be a fully $\calF$-normalized, $\calF$-centric subgroup of $P$. Then by hypothesis, $W(P) \leq W(Q)$.
Let $\alpha \in \Aut_\calF(Q)$. By Alperin's fusion theorem, it suffices to show that $W(P)$ is 
invariant under $\alpha$. We do this by induction on 
$|P:Q|$. If $Q = P$, then $\alpha(W(P)) = W(P)$ since $W(P)$ is a characteristic subgroup of $P$, 
so suppose that $Q < P$.
Then $C_P(W(Q)) > Q$. Let $\beta: W(Q) 
\to R$ be an isomorphism in $\calF$ with $R$ fully $\calF$-normalized. Then by the extension axiom,
$\beta$ extends to a map $\tilde{\beta}: C_P(W(Q)) \to P$. By induction and Alperin's fusion theorem,
we have that $\beta(W(P)) = \tilde{\beta}(W(P)) = W(P)$. But $\beta\alpha|_{W(Q)}$ also extends to $C_P(W(Q))$,
and $\beta\alpha(W(P)) = W(P)$ by the same reasoning. Therefore $\alpha(W(P)) = \inv{\beta}\beta\alpha(W(P))
= W(P)$, and this completes the proof.
\end{proof}

We are now ready to prove Theorem \ref{norm}.
\begin{named}{Theorem \ref{norm}}
Suppose $P$ has nilpotence class at most $n(p-1)+1$ and $\calF$ is a fusion system on $P$. Then $\mho^n(Z(P))$
is normal in $\calF$. 
\end{named}
\begin{proof}
Let $W = \mho^nZ$. If $C_P(Q) \leq Q \leq P$, then $Z(P) \leq Z(Q)$ and so $W(P) = \mho^n(Z(P)) \leq \mho^n(Z(Q))
= W(Q)$. Thus $W$ satisfies the hypotheses of Proposition \ref{poschar}, and Lemma \ref{goldcent}
says that there is no
proper subgroup of $P$ with $C_P(W(Q)) = Q$. Therefore by Proposition \ref{poschar}, $\mho^n(Z(P))$ is 
normal in $\calF$.
\end{proof}

Theorem \ref{main} now follows immediately from Theorem \ref{norm}. The following example generalizes
a remark of Goldschmidt's in \cite{Goldschmidt2008}, and shows that the bound on the exponent of
$Z(P)$ given in Theorem \ref{main} is sharp.

\begin{ex}\label{exsharp}
Let $p$ be an odd prime, let $G = \SL(p+1, q)$ with $|q-1|_p = p^n$, and let $P$ be a Sylow $p$-subgroup of $G$. 
Then $P$ is a isomorphic to $C_{p^n} \wr C_p$. Let $x$ be the wreathing element, a $p$-cycle
permutation matrix. Then $P' = [P, P]$ is isomorphic to a direct product of $p-1$
copies of $C_{p^n}$. Let $P_0 = \langle P', x \rangle$. 
Let $a_1, \dots, a_{p-1}$ be generators for the $p-1$ cyclic groups of $P'$ of order $p^n$. Then $x$ sends
$a_i$ to $a_{i+1}$ for $1 \leq i \leq p-2$ and $a_{p-1}$ to $\inv{a_1}\cdots\inv{a_{p-1}}$.

Let $\mathcal F = \mathcal F_P(G)$. We first claim that
\begin{eqnarray}
\label{opf}
    O_p(\mathcal F) = 1.
\end{eqnarray}
Suppose to the contrary and choose $1 \neq N \leq P$ normal in $\mathcal F$.
Then $N$ contains $\Omega_1(Z(P))$.  Let $Q$ be the unique maximal abelian
subgroup of $P$. Then $\Omega_1(Z(P)) \leq Q$, and the alternating group
$\Alt(p+1) \leq \Aut_{\mathcal F}(Q)$ acts irreducibly on $\Omega_1(Q)$,
so $N$ contains $\Omega_1(Q)$. As $p$ divides $q-1$, the wreathing element $x$ is
diagonalizable, hence $x$ is $\mathcal F$-conjugate to an element in
$\Omega_1(Q)$. It follows that $x \in N$. As $N$ is normal in $P$, we have $[P,
x] \leq N$. Since $[P, x]$ contains an elements of order $p^n$ and $\Alt(p+1)$
also acts irreducibly on the section $Q/\Omega_{n-1}(Q)$, we have that $Q \leq
N$. Now $P = \langle Q, x \rangle$, so $P$ is normal in $\mathcal F$.  But $Q$
is a characteristic subgroup of $P$. This means that $Q$ is normal in $\mathcal
F$, and hence strongly $\mathcal F$-closed. Since $x \nin Q$, and $x$ is
$\mathcal F$-conjugate to an element of $Q$, this is a contradiction. Thus,
(\ref{opf}) holds.

Now as $Z(P)$ has exponent $p^n$, the bound in Theorem \ref{main} is sharp for $\mathcal F$ provided the class 
of $P$ is $n(p-1) + 1$. For this it suffices
to show that $P_0$ has class $n(p-1)$, that is, $P_0$ is of maximal class.

For $n = 1$, $P_0$ is of maximal class $p-1$. We show by induction on $n$ that
\begin{eqnarray}
\label{exeqn}
[P', x; p-1] = \Omega_{n-1}(P')
\end{eqnarray}
and this will complete the proof.
Factoring by $\Omega_{n-1}(P')$, we have $[P'/\Omega_{n-1}(P'), x; p-1] = 1$ so that $[P', x; p-1]
\leq \Omega_{n-1}(P')$ in any case.

Suppose first that $n=2$. By direct computation,

\[
[a_1, x; p-1] = \prod_{k=0}^{p-2} a_{k+1}^{(-1)^k \binom{p-1}{k} - 1}.
\]
The sum of the exponents of the $a_i$ in $[a_1, x; p-1]$ is

\[
-p + 1 + \sum_{k=0}^{p-2} (-1)^k \binom{p-1}{k} = -p + 1 + (1-1)^{p-1} - \binom{p-1}{p-1} = -p.
\]
This means that $[a_1, x; p-1]$ lies outside the sum-zero submodule (which is the unique maximal
submodule) for the action of $x$ on
$\Omega_1(P')$, and so $[P', x; p-1] = \Omega_1(P')$. 

Let $n \geq 3$ be arbitrary. Let $N = [P', x; p-1]$. By induction we have that $N$ contains 
$[\Omega_{n-1}(P'),x;p-1]=\Omega_{n-2}(P')$ and by the $n=2$ case, 
we know that $N$ covers $\Omega_1(P'/\Omega_{n-2}(P'))$ modulo $\Omega_{n-2}(P')$. Therefore,
$N = \Omega_{n-1}(P')$, proving (\ref{exeqn}).

It now follows that $P$ has class $n(p-1) + 1$ while $Z(P)$ has exponent $p^n$, and so the bound of Theorem
\ref{main} is sharp. 
\end{ex}

\section{A factorization theorem}

We now turn to the proof of Theorem \ref{fact}. We will need 
a version of the Frattini argument due to Onofrei and Stancu \cite[Proposition \!3.7]{OnofreiStancu2009}.

\begin{prop}\label{frattini}
Let $\calF$ be a fusion system on $P$ and suppose $Q \leq P$ is normal in $\calF$. Then
\[
\calF = \langle \,PC_\calF(Q),\,\, N_\calF(QC_P(Q)) \,\rangle.
\]
\end{prop}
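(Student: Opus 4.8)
Write $D = QC_P(Q)$ and set $\mathcal{E} = \langle PC_\calF(Q),\, N_\calF(D)\rangle$; since both generators lie in $\calF$ we have $\mathcal{E}\cin\calF$, so all the content is in the reverse inclusion. I would begin by recording the structural facts that make the two local subsystems available. Because $Q\trianglelefteq\calF$ we have $Q\trianglelefteq P$, hence $C_P(Q)\trianglelefteq P$ and $D\trianglelefteq P$; thus $N_P(Q)=N_P(D)=P$, the subgroup $D$ is fully $\calF$-normalized, and $C_P(D)\leq C_P(Q)\leq D$ shows $D$ is $\calF$-centric. Consequently both $PC_\calF(Q)=N_P(Q)C_\calF(Q)$ and $N_\calF(D)$ are saturated fusion systems on $P$, and $\mathcal{E}$ is a genuine fusion system on $P$.

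By Alperin's fusion theorem it suffices to prove $\Aut_\calF(R)\cin\mathcal{E}$ for every fully $\calF$-normalized, $\calF$-centric, $\calF$-radical subgroup $R$. Fix such an $R$ and $\alpha\in\Aut_\calF(R)$. Since $Q$ is normal in $\calF$ it is contained in $R$ (Proposition~\ref{equivnorm}) and is strongly $\calF$-closed, so $\alpha(Q)=Q$ and $\alpha_0:=\alpha|_Q\in\Aut_\calF(Q)$. The plan is to split $\alpha$ as $\alpha=\nu\circ\mu$ with $\mu$ a morphism of $N_\calF(D)$ and $\nu$ a morphism of $PC_\calF(Q)$, in imitation of the ordinary Frattini argument: with $C=C_G(Q)$ in the group case one has $C_P(Q)\in\Syl_p(C)$ and $G=C_G(Q)N_G(D)$, and it is precisely this factorization that must be reproduced internally.

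\emph{Core construction.} Using the extension axiom, and noting $D=QC_P(Q)\leq N_{\alpha_0}$, I would first extend $\alpha_0$ to some $\beta_0\in\Aut_\calF(D)$ with $\beta_0|_Q=\alpha_0$ and $\beta_0(D)=D$; the same extension argument shows the restriction map $\rho\colon\Aut_\calF(D)\to\Aut_\calF(Q)$ is onto, with kernel $K$. The aim is then to adjust $\beta_0$ to an extension $\beta$ of $\alpha_0$ satisfying $\beta\,\Aut_R(D)\,\beta^{-1}\leq\Aut_P(D)$, for then $R\leq N_\beta$ and $\beta$ extends over $DR$ to a morphism $\mu:=\tilde{\beta}|_R\colon R\to R'$ of $N_\calF(D)$ with $\mu|_Q=\alpha_0$. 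Granting such $\mu$, the morphism $\nu:=\alpha\circ\mu^{-1}\colon R'\to R$ satisfies $\nu|_Q=\id_Q$; since $Q\leq R',R$, the identity extension witnesses that $\nu$ is a morphism of $PC_\calF(Q)$, whence $\alpha=\nu\circ\mu\in\mathcal{E}$, as required.

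The main obstacle is producing the correct extension $\beta$, and this is where saturation does the real work. Here $\Aut_P(D)\in\Syl_p(\Aut_\calF(D))$ (as $D$ is fully normalized) and $\rho(\Aut_P(D))=\Aut_P(Q)\in\Syl_p(\Aut_\calF(Q))$. Setting $U=\beta_0\,\Aut_R(D)\,\beta_0^{-1}$, one computes $\rho(U)=\Aut_R(Q)\leq\Aut_P(Q)$, so $U\leq\Aut_P(D)K$; applying Sylow's theorem inside the subgroup $\Aut_P(D)K$, in which $\Aut_P(D)$ is Sylow and $K$ is normal, yields $k\in K$ with $kUk^{-1}\leq\Aut_P(D)$, and then $\beta=k\beta_0$ is the desired extension. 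The remaining points, namely surjectivity of $\rho$, the fact that a fully normalized subgroup has $\Aut_P$ a Sylow of its automizer, and the elementary check that a morphism restricting to $\id_Q$ lies in $PC_\calF(Q)$, are routine consequences of the saturation axioms and the definitions.
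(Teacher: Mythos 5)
Your proof is correct, and it is essentially the argument of the paper's cited source: the paper itself gives no proof of Proposition~\ref{frattini}, quoting it from Onofrei and Stancu \cite{OnofreiStancu2009}, whose proof proceeds exactly along your lines --- reduce via Alperin's fusion theorem to $\alpha\in\Aut_\calF(R)$ with $Q\leq R$, extend $\alpha|_Q$ over $D=QC_P(Q)$ by the extension axiom, and correct the extension by a Sylow conjugation in $\Aut_P(D)\ker\rho$ so that it spreads over $R$, splitting $\alpha$ as a morphism of $N_\calF(D)$ followed by one of $PC_\calF(Q)$. All the steps you flag as routine do check out (in particular $\rho(U)=\Aut_{\alpha(R)}(Q)=\Aut_R(Q)$ since $\alpha$ is defined on all of $R$, and your weakly-closed observation on $D$ justifies the centricity claim, which in any case your argument never actually uses).
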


\begin{lem}\label{goldj}
Suppose $P$ is a $p$-group, $Q \norm P$, and $C_P(\mho^1(Z(Q))) = Q$. Then $J(P) \leq Q$.
\end{lem}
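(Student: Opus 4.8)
The plan is to show that every abelian subgroup of $P$ of maximal order is contained in $Q$; since $J(P)$ is generated by such subgroups, this immediately yields $J(P) \leq Q$. So fix an abelian subgroup $A \leq P$ of maximal order, and write $Z = Z(Q)$ and $V = \mho^1(Z)$. Both $Z$ and $V$ are abelian and characteristic in $Q$, hence normal in $P$ since $Q \norm P$. Because $V \leq Z$ we have $C_P(Z) \leq C_P(V) = Q$, while $Q$ always centralizes $Z(Q)$; thus $C_P(Z) = Q = C_P(V)$. The key external input will be the Thompson Replacement Theorem: for an abelian normal subgroup $B \norm P$ and any $A$ of maximal order, $C_A(B)B$ is again abelian of maximal order (the quadratic condition required, $[A,B,B]=1$, holds automatically here since $[A,B] \leq B$ and $B$ is abelian).

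First I would apply replacement with $B = V$. Since $C_A(V) = A \cap C_P(V) = A \cap Q$ and $V \leq Q$, the resulting maximal abelian subgroup $A^* := C_A(V)V = (A \cap Q)V$ lies inside $Q$. The crucial observation is then that $A^*$, having maximal order, is self-centralizing in $P$ (that is, $C_P(A^*) = A^*$), while $Z = Z(Q)$ centralizes every subgroup of $Q$ and in particular $A^*$. Hence $Z \leq C_P(A^*) = A^*$.

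This forces $Z \leq (A \cap Q)\,\mho^1(Z)$. As $Z$ is abelian, $\mho^1(Z) = \Phi(Z)$, and a short computation (pulling each $z \in Z$ back into $A \cap Z$) gives $Z = (A \cap Z)\,\Phi(Z)$, so that $A \cap Z = Z$ by the Frattini argument, i.e. $Z \leq A$. Since $A$ is abelian this means $A$ centralizes $Z$, whence $A \leq C_P(Z) = Q$, as desired.

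The step I expect to be the main obstacle is pinning down the Replacement Theorem in exactly the form used, in particular checking that its hypotheses are met (or, if I reprove it, establishing the nontrivial inequality $|C_A(V)V| \geq |A|$ from the maximality of $A$); everything afterward — the self-centralizing trick $Z \leq A^*$ and the Frattini reduction — is routine. I would emphasize that a \emph{single} application of replacement (to $V$) suffices, and that the role of $\mho^1$ is precisely to identify $V$ with the Frattini subgroup $\Phi(Z)$, so that the inclusion $Z \leq (A \cap Q)\,\Phi(Z)$ collapses to $Z \leq A$.
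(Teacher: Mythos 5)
Your endgame is sound, but the step carrying all the weight is false as stated. The ``replacement theorem'' you invoke --- that for every abelian $B \norm P$ and every abelian subgroup $A$ of maximal order, $C_A(B)B$ again has maximal order --- is not a theorem, and your verification of its hypothesis rests on a commutator-order slip: the quadratic condition appearing in the Thompson/Glauberman replacement theorems is $[B,A,A]=1$ (that $A$ acts quadratically on $B$), not $[A,B,B]=1$; the latter is indeed automatic when $B$ is abelian and normal, which is exactly why it carries no content. Worse, even genuine quadratic action does not rescue your version. Take $B = \mathbb{F}_p^{5}$, written as pairs $(v_1,v_2)$ with $v_1 \in \mathbb{F}_p^{2}$, $v_2 \in \mathbb{F}_p^{3}$, let $A \cong \mathbb{F}_p^{2\times 3}$ act faithfully and quadratically by $M\cdot(v_1,v_2) = (v_1 + Mv_2,\, v_2)$, and set $P = B \rtimes A$. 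Then $C_B(A) = \mathbb{F}_p^2 \times 0$ and $C_B(A)\times A$ is abelian of order $p^{8}$, so every abelian subgroup $A^{*}$ of maximal order in $P$ has order at least $p^{8}$; but $C_P(B) = B$ by faithfulness, so $C_{A^{*}}(B)B = (A^{*}\cap B)B = B$ has order $p^{5}$ --- never of maximal order. What the actual replacement theorems conclude is only that \emph{some} abelian $A^{*}$ of maximal order exists with $A^{*} \leq AB$ and $A^{*}\cap B > A\cap B$; none of them asserts $C_A(B)B$ has maximal order, and none of them would place $A^{*}$ inside $Q$, which is what your argument needs.

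Everything after that step is correct and would close the proof if the step held: granting $A^{*} = (A\cap Q)V$ of maximal order, it lies in $Q$, hence is centralized by $Z = Z(Q)$, so $Z \leq C_P(A^{*}) = A^{*}$ by self-centralization; pulling each $z \in Z$ back gives $Z = (A\cap Z)\mho^1(Z) = (A\cap Z)\Phi(Z)$, whence $Z \leq A$ and $A \leq C_P(Z) \leq C_P(V) = Q$. But note that the inequality you flagged as the ``main obstacle,'' namely $|A : A\cap Q| \leq |V : A\cap V|$, is not a technical preliminary: a posteriori it holds in this setting only because the lemma itself forces $A \leq Q$ (making the left-hand side $1$), so establishing it directly is essentially equivalent to proving the lemma, and no soft general principle will deliver it. For comparison, the paper does not prove this statement at all --- it cites Lemma 8 of Goldschmidt's thesis --- so a correct self-contained argument would have been genuinely valuable; as it stands, yours has a real gap at the replacement step.
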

\begin{proof}
This is Lemma $8$ in \cite{Goldschmidt2008}.
\end{proof}

The \emph{Thompson ordering} on subgroups of $P$ is defined by
\[
Q \leq_P Q' \quad \text{iff} \quad |N_P(Q)| \leq |N_P(Q')|\quad \text{or} \quad|N_P(Q)| = |N_P(Q')|\quad \text{and}\quad |Q| \leq |Q'|.
\]
We are now ready to prove

\begin{named}{Theorem \ref{fact}}
Let $\calF$ be a fusion system on $P$. Then
\[
\calF = \langle\,\,C_\calF(\mho^1(Z(P)),\, N_{\calF}(J(P))\,\,\rangle.
\]
\end{named}

\begin{proof}
Write $\calF' = \langle\, C_\calF(\mho^1(Z(P))),\,\, N_\calF(J(P)) \,\rangle$.
Since each $\calF$-centric subgroup of $P$ contains $Z(P)$, it suffices by Alperin's fusion theorem to
prove that $N_\calF(Q) \cin \calF'$ for all $Q \leq P$ with $Z(P) \leq Q$. We do this by induction on
the Thompson ordering. If $Q = P$, then $N_\calF(Q) \cin N_\calF(J(P)) \cin \calF'$, since $J(P)$ is
a characteristic subgroup of $P$,
so suppose that $Q <_P P$ with $Z(P) \leq Q$ and that $N_\calF(Q') \cin \calF'$ for all $Q' >_P Q$
with $Z(P) \leq Q'$. 

First we reduce to the case where $Q$ is fully $\calF$-normalized.
Suppose $Q$ is not fully $\calF$-normalized. By \cite[Lemma \!2.2]{KessarLinckelmann2008}, there exists $\alpha: N_P(Q) \to P$
such that $\alpha(Q)$ is fully $\calF$-normalized. Note that $\alpha(Q) >_P Q$, 
and since $R >_P Q$ for every $R \leq P$ with $|N_P(Q)| \leq |R|$, we have by induction
and Alperin's fusion theorem that $\alpha$ is in $\calF'$. Also note that $\alpha(N_P(Q)) \leq N_P(\alpha(Q))$;
we still denote by $\alpha$ the induced morphism $N_P(Q) \to N_P(\alpha(Q))$. 
Let $\varphi: R_1 \to R_2$ be a morphism in $N_\calF(Q)$, and let $\tilde{\varphi}$ be an
extension to $QR_1 \leq N_P(Q)$. Then $\alpha\tilde{\varphi}\inv{\alpha}: \alpha(Q)\alpha(R_1) \to \alpha(Q)\alpha(R_2)$ restricts 
to an automorphism of $\alpha(Q)$, whence is contained in $\calF'$ by induction. But $\alpha$ is
in $\calF'$, so $\varphi$ is in $\calF'$ too. Thus $N_\calF(Q) \cin \calF'$, so henceforth we assume $Q$ 
is fully $\calF$-normalized.

For brevity, set $W = \mho^1(Z(Q))$, $N = N_P(Q)$, and $C = C_N(W)$. Then $C \norm N$, so that $N_P(C) \geq N$.
Suppose first that $C = Q$. Then by Lemma \ref{goldj}, we have $J(N) \leq Q$. As $J(N) \norm N_P(N)$, either $J(N) >_P Q$ or $N = P$.
In the first case, since $Z(P) \leq J(N)$ and $J(N) = J(Q)$ is a characteristic subgroup of $Q$, 
we apply induction to get $N_\calF(Q) \cin N_\calF(J(N)) \cin \calF'$.
In the second case we have $J(P) \leq Q$, so $J(P) = J(Q)$, and hence $N_\calF(Q) \cin N_\calF(J(P)) \cin \calF'$ here as well. 

Assume now that $C > Q$. Then $C >_P Q$ because $C \norm N$. Looking to see that $W \norm N_\calF(Q)$, we apply
Proposition \ref{frattini} in this normalizer to get
\[
N_\calF(Q) = \langle\,\, NC_{N_\calF(Q)}(W),\, N_{N_\calF(Q)}(C)\,\,\rangle.
\]
Since $C$ contains $Z(P)$, we have by induction that $N_{N_\calF(Q)}(C) \cin N_\calF(C) \cin \calF'$, so to complete the proof, it suffices to 
show that $NC_{N_\calF(Q)}(W) \cin C_{\calF}(\mho^1(Z(P)))$. To see this, 
let $R_1, R_2 \leq N$, and let $\varphi: R_1 \to R_2$ be a morphism in $NC_{N_\calF(Q)}(W)$. Then there exists
$x \in N$ such that $\varphi$ extends to an $\calF$-map $\tilde{\varphi}: WR_1 \to WR_2$ with $\tilde{\varphi}\,|_W = c_x$,
the conjugation map induced by $x$.
But since $Q$ contains $Z(P)$, it follows that $W = \mho^1(Z(Q)) \geq \mho^1(Z(P))$, and so 
$\tilde{\varphi}\,|_{\mho^1(Z(P))} = c_x\,|_{\mho^1(Z(P))} = \id_{\mho^1(Z(P))}$. Therefore, $\varphi \in
C_{\calF}(\mho^1(Z(P)))$, as was to be shown. We conclude that $N_{\calF}(Q) \cin \calF'$ and the result follows.
\end{proof}

\begin{remark}
In \cite[Theorem \!4.1]{DiazGlesserMazzaPark2009}, the authors prove in part that for any fusion system $\calF$ on $P$, 
$\mho^1(Z(P)) \cap Z(N_\calF(J(P))) \leq Z(\calF)$ by reducing to the group case. Theorem \ref{fact} gives
a reduction-free proof of this fact.
\end{remark}

\section{Acknowledgements}
We would like to thank Ron Solomon for encouraging us to take up this work.

\bibliographystyle{amsplain}
\bibliography{/home/jlynd/math/research/mybib}

\end{document}